\newtheorem{theorem}{Theorem}[section]
\newtheorem{proposition}[theorem]{Proposition}
\newtheorem{corollary}[theorem]{Corollary}
\newtheorem{lemma}[theorem]{Lemma}
\newtheorem{remark}[theorem]{Remark}
\DeclareMathOperator{\Suffix}{Suffix} 
\DeclareMathOperator{\rr}{rr} 
\DeclareMathOperator{\Adj}{Adj} 
\DeclareMathOperator{\app}{app} 
\DeclareMathOperator{\toSet}{toSet} 
\DeclareMathOperator{\prb}{p} 
\DeclareMathOperator{\Cand}{A} 
\begin{document}

\title{Secretary problem and two almost the same consecutive applicants}

\author{Josef Rukavicka\thanks{Department of Mathematics,
Faculty of Nuclear Sciences and Physical Engineering, Czech Technical University in Prague
(josef.rukavicka@seznam.cz).}}

\date{\small{June 29, 2021}\\
   \small Mathematics Subject Classification: 60G40}

\maketitle

\begin{abstract}
We present a new variant of the secretary problem. Let $\Cand_n$ be a totally ordered set of $n$ \emph{applicants}. Given $P\subseteq \Cand_n$ and $x\in\Cand_n$, let $\rr(P,x)=\vert\{z\in P \mid z\leq x\}\vert\mbox{ }$ be the \emph{relative rank of} $x$ \emph{with regard to} $P$, and let $\rr_n(x)=\rr(\Cand_n,x)$. Let $x_1,x_2,\dots,x_n\in \Cand_n$ be a random sequence of distinct applicants. The aim is to select $1<j\leq n$ such that $\rr_n(x_{j-1})-\rr_n(x_j)\in\{-1,1\}$.

Let $\alpha$ be a real constant with $0<\alpha<1$. Suppose the following stopping rule $\tau_n(\alpha)$: reject first $\alpha n$ applicants and then select the first $x_j$ such that $\rr(P_j,x_{j-1})-\rr(P_j,x_j)\in\{-1,1\}$, where $P_j=\{x_i\mid 1\leq i\leq j\}$. Let $\prb_{n,\tau}(\alpha)$ be the probability that $\tau_n(\alpha)$ selects $x_j$ such that $\rr_n(x_{j-1})-\rr_n(x_j)\in\{-1,1\}$.
We show that \[\lim_{n\rightarrow\infty}\prb_{n,\tau}(\alpha)\leq \lim_{n\rightarrow\infty}\prb_{n,\tau}\left(\frac{1}{2}\right)=\frac{1}{2}\mbox{.}\]

\end{abstract}

\section{Introduction}
The classical \emph{secretary problem} can be formulated as follows:
\begin{itemize}
\item There is a set $\Cand_n$ of $n$ rankable applicants.
\item A company wants to hire the best applicant from the set $\Cand_n$.
\item The applicants come sequentially in a random order to be interviewed by the company. 
\item After interviewing an applicant, the company has to immediately decide if the applicant is selected or rejected.
\item The rejected applicants cannot be recalled.
\item The company knows only the number $n$ and the relative ranks of the applicants being interviewed so far.
\end{itemize}
The first articles presenting a solution to the secretary problem are \cite{Chow1964}, \cite{Dynkin1963}, and \cite{10.2307/2985407}. An optimal stopping rule maximizes the probability of selecting the best applicant. The optimal stopping rule for the classical secretary problem is as follows: Reject the first $ne^{-1}$ applicants and then select the first applicant, who is the best between all the applicants being interviewed so far. The probability of selecting the best applicant with this optimal stopping rule is equal to $e^{-1}$.

Quite many generalizations of the secretary problem have been investigated. To name just a few examples, in \cite{Rose1982} an optimal stopping rule is shown for selecting an applicant, who is ``representative'' for the given set of applicants. The article \cite{Bayon2018} researches an optimal stopping rule for selecting the best or the worst applicant and for selecting the second best applicant. 

Some recent surveys on generalizations of the secretary problem can be found in \cite{Bayon2018}, \cite{10.1214/19-PS333}, and \cite{Szajowski2009}. 

In the current article we consider a variant of the secretary problem, that consists in selecting two consecutive applicants whose relative ranks differ by one. Let $\Cand_n$ be a totally ordered set of $n$ applicants. Given $P\subseteq \Cand_n$ and $x\in\Cand_n$, let $\rr(P,x)=\vert\{z\in P \mid z\leq x\}\vert\mbox{ }$ be the \emph{relative rank of} $x$ \emph{with regard to} $P$, and 
let $\rr_n(x)=\rr(\Cand_n,x)$. We formulate the variant as follows:
\begin{itemize}
\item A company wants to hire two applicants $x,y\in\Cand_n$ such that their relative ranks differ by one; formally $\rr_n(x)-\rr_n(y)\in\{-1,1\}$.
\item The applicants come sequentially in a random order to be interviewed by the company. 
\item After interviewing an applicant, the company has to decide if the previous applicant is selected or rejected. 
\item After interviewing an applicant, the company is allowed to immediately decide if the applicant is selected or rejected.
\item The rejected applicants cannot be recalled.
\item The company has to hire both applicants at the same time immediately after selecting them.
\item The company knows only the number $n$ and the relative ranks of the applicants being interviewed so far.
\end{itemize}
From the formulation it follows that the two selected applicants has to be consecutive in the random sequence.

Let $\alpha$ be a real constant with $0<\alpha<1$ and let $x_1,x_2,\dots,x_n\in\Cand_n$ be a random sequence of distinct applicants. 
Suppose the following stopping rule $\tau_n(\alpha)$: reject first $\alpha n$ applicants and then select the first pair of applicants $x_{j-1}, x_j$ such that $\rr(P_j,x_{j-1})-\rr(P_j,x_j)\in\{-1,1\}$, where \[P_j=\{x_i\mid 1\leq i\leq j\}\mbox{.}\]

Let $\prb_{n,\tau}(\alpha)$ be the probability that $\tau_n(\alpha)$ selects $x_j$ such that $\rr_n(x_{j-1})-\rr_n(x_j)\in\{-1,1\}$.
The main result of the current article is the following theorem.
\begin{theorem}
\label{uwws45124}
If $\alpha$ is a real constant and $0<\alpha<1$ then
\[\lim_{n\rightarrow\infty}\prb_{n,\tau}(\alpha)\leq \lim_{n\rightarrow\infty}\prb_{n,\tau}\left(\frac{1}{2}\right)=\frac{1}{2}\mbox{.}\]
\end{theorem}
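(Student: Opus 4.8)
The plan is to pass from the arrival order to the sequence of \emph{relative ranks} $R_i=\rr(P_i,x_i)\in\{1,\dots,i\}$, which for a uniformly random order are independent, $R_i$ being uniform on $\{1,\dots,i\}$. Write $E_i$ for the event $\rr(P_i,x_{i-1})-\rr(P_i,x_i)\in\{-1,1\}$ on which the rule is allowed to stop at step $i$, and set $m=\lfloor\alpha n\rfloor$ and $\tau=\min\{j>m:E_j\text{ holds}\}$. The first and most important step is the identity $E_i\iff R_i\in\{R_{i-1},R_{i-1}+1\}$: inserting $x_i$ into $P_{i-1}$ raises the relative rank of $x_{i-1}$ by one exactly when $R_i\le R_{i-1}$, and a short case distinction shows that the relative ranks of $x_{i-1}$ and $x_i$ in $P_i$ end up adjacent precisely for these two values of $R_i$. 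From this I would extract the structural fact that $E_2,\dots,E_n$ are \emph{mutually independent} with $\Pr[E_i]=2/i$. The clean route is induction: since $\Pr[E_i\mid R_{i-1}=r]=\Pr[R_i\in\{r,r+1\}]=2/i$ for every admissible $r$, the event $E_i$ is independent of $R_{i-1}$; and as $E_2,\dots,E_{i-1}$ are functions of $R_1,\dots,R_{i-1}$ while, given $R_{i-1}$, the event $E_i$ depends only on the fresh variable $R_i$, conditioning on $R_{i-1}$ shows $E_i$ is independent of the whole vector $(E_2,\dots,E_{i-1})$. I expect establishing this independence to be the main obstacle; everything downstream is essentially a computation.

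Granting independence, $\tau$ is the first index $i>m$ at which a Bernoulli$(2/i)$ trial succeeds, so $\Pr[\tau>j]=\prod_{i=m+1}^{j}(1-2/i)$. Writing $1-2/i=(i-2)/i$ telescopes the product to $\Pr[\tau>j]=\frac{m(m-1)}{j(j-1)}$, hence $\Pr[\tau=j]=\frac{2\,m(m-1)}{j(j-1)(j-2)}$.

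Next I would compute the conditional probability of \emph{success}, namely that the selected pair is genuinely rank-adjacent in $\Cand_n$. If $x_{j-1},x_j$ are adjacent among the first $j$ applicants, their true ranks differ by $1+g$, where $g$ counts the later applicants whose rank lies between them, and success means $g=0$. Conditioning on the relative order of the first $j$ applicants, their absolute ranks form a uniformly random $j$-subset of $\{1,\dots,n\}$, so the $j+1$ spacings produced by the remaining $n-j$ applicants are exchangeable (the spacing vector is uniform over all compositions of $n-j$ into $j+1$ nonnegative parts). Consequently the single relevant spacing $g$ has law $\Pr[g=k]=\binom{n-1-k}{j-1}/\binom{n}{j}$, which does not depend on where in the order the adjacent pair sits and is therefore unaffected by the extra conditioning on $E_{m+1},\dots,E_{j-1}$ failing. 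In particular $\Pr[\text{success}\mid\tau=j]=\Pr[g=0]=j/n$ exactly; the boundary check $j=n$ giving probability $1$ is reassuring.

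Finally I would assemble, now with everything exact,
\[
\prb_{n,\tau}(\alpha)=\sum_{j=m+1}^{n}\Pr[\tau=j]\,\frac{j}{n}=\frac{2\,m(m-1)}{n}\sum_{j=m+1}^{n}\frac{1}{(j-1)(j-2)}=\frac{2m}{n}-\frac{2\,m(m-1)}{n(n-1)},
\]
where the last sum telescopes via $\frac{1}{(j-1)(j-2)}=\frac{1}{j-2}-\frac{1}{j-1}$. Since $m=\lfloor\alpha n\rfloor$ gives $m/n\to\alpha$, this tends to $2\alpha(1-\alpha)$. The function $2\alpha(1-\alpha)$ is at most $\tfrac12$ on $(0,1)$ with equality exactly at $\alpha=\tfrac12$, which simultaneously delivers the limiting value $\tfrac12$ at $\alpha=\tfrac12$ and the asserted inequality for every $\alpha$.
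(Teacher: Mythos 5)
Your proposal is correct, and it reaches the theorem by a genuinely different route than the paper. The paper works by exact enumeration of permutations: it defines the stopping sets $\Lambda_n(r,k,z,\overline z)$ and counts them with a backward suffix-by-suffix construction (Proposition \ref{eyr63yt7}), which only works smoothly because the paper augments adjacency with the wrap-around pair $\{\min,\max\}$ (the set $\Delta(k)$), so that every applicant has exactly two adjacent partners (Lemma \ref{ey77f983jh}); it then sums over $k$ with the same telescoping you use (Theorem \ref{dieief887e}), and finally must undo the wrap-around device by a cyclic-shift bijection showing $\vert\Pi_n(r)\vert/\vert\Lambda_n(r)\vert=1/n$ (Lemma \ref{oope0rt9jj}). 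You instead work probabilistically with the sequential ranks $R_i$: your three key facts --- the equivalence $E_i\iff R_i\in\{R_{i-1},R_{i-1}+1\}$, the mutual independence of the $E_i$ with $\Pr[E_i]=2/i$, and the exact conditional success probability $\Pr[\text{success}\mid\tau=j]=j/n$ via independence of the prefix pattern from the prefix value set --- replace, respectively, the paper's adjacency bookkeeping, its product formula for $\vert H(j)\vert$, and its pair $(\,\Adj(\Cand_n)$-condition in $\Lambda_n(r,k)$ plus the $\Pi_n$ correction$\,)$. Your route buys two things: an exact finite-$n$ formula $\prb_{n,\tau}(\alpha)=\frac{2m}{n}-\frac{2m(m-1)}{n(n-1)}$ rather than only a limit, and an analysis of the stopping rule exactly as stated in the abstract (stopping only on observed rank difference $\pm1$), thereby avoiding the slight mismatch in the paper between the rule $\tau_n(\alpha)$ and the wrap-around rule $\widetilde\tau_n(r)$ that Remark \ref{thf87ejdh} and Lemma \ref{rud883kfjh} gloss over (a discrepancy that vanishes in the limit but makes the paper's exact identity delicate). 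What the paper's approach buys is that it is purely combinatorial --- exact cardinalities of the refined sets $\Lambda_n(r,k,z,\overline z)$ with no appeal to conditional probability or exchangeability --- and both approaches, reassuringly, telescope the same sum $\sum 1/\bigl((k-1)(k-2)\bigr)$ to arrive at $2\alpha(1-\alpha)$.
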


Less formally stated, with the given stopping rule $\tau_n(\alpha)$, the optimal strategy is to reject the first half of applicants, and then to select the first consecutive pair of applicants, whose relative ranks differ by one between the applicants being interviewed so far. The probability of success with this rule is equal to $2^{-1}$ as $n$ tends to infinity.


\begin{remark}
The sequence A002464 (\url{https://oeis.org/A002464}) expresses ``the number of permutations of length n without rising or falling successions''. 
Let $a(n)$ denote this integer sequence A002464. It is known \cite{BAGNO2020103119,doi:10.1080/00029890.1980.11994973}, that $\lim_{n\rightarrow\infty}\frac{a(n)}{n!}=e^{-2}$.

It follows that if $x_1,x_2,\dots,x_n\in\Cand_n$ is a random sequence of $n$ distinct applicants, then with probability $e^{-2}$ there is no $j\in\{2,\dots,n\}$ such that $\rr_n(x_j)-\rr_n(x_{j-1})\in\{-1,1\}$  as $n$ tends to infinity. 
\end{remark}

\section{Preliminaries}
For the whole article, suppose that $n>3$, where $n=\vert \Cand_n\vert$.

Let $\mathbb{R}^+$ denote the set of all positive real numbers, let $\mathbb{Z}$ denote the set of all integers, and let $\mathbb{N}^+$ denote the set of all positive integers.


Given $j\in\mathbb{N}^+$, let $\Cand_n^{j}=\{(x_1,\dots,x_j)\mid x_i\in\Cand_n\mbox{ for all }i\in\{1,2,\dots,j\}\}$ and let $\Cand_n^{+}=\bigcup_{j\in\mathbb{N}^+}\Cand_n^j$. The elements of $\Cand_n^+$ are called \emph{sequences of applicants} or just $\emph{sequences}$.

Suppose $\vec{x},\vec{y}\in\Cand_n^+$, where $\vec{x}=(x_1,x_2,\dots,x_i)$ and $\vec{y}=(y_1,y_2,\dots,y_j)$. Let $\vec{x}\circ\vec{y}\in\Cand_n^{i+j}$ denote the concatenation of $\vec{x}$ and $\vec{y}$; formally \[\vec{x}\circ\vec{y}=(x_1,x_2,\dots,x_i,y_1,y_2\dots,y_j)\in\Cand_n^{i+j}\mbox{.}\]
\begin{remark}
We consider that $\Cand_n=\Cand_n^1$; it means that if $x\in \Cand_n$ then $x=(x)$ is a sequence of length $1$.
\end{remark}

Given $\vec{x}=(x_1,x_2,\dots,x_k)\in\Cand_n^+$, $i,j\in\{1,2,\dots,k\}$, and $i\leq j$, let $\vec{x}[i]=x_i$, let $\vec{x}[i,j]=(x_i,x_{i+1}, \dots, x_j)\in\Cand_n^{j-i+1}$,  let $\vert \vec{x}\vert=k$ denote the length of $\vec{x}$, and let $\toSet(\vec{x})=\{\vec{x}[i]\mid 1\leq i\leq \vert \vec{x}\vert\}$ be the set of applicants of the sequence $\vec{x}$. 

Given $k\in\mathbb{N}^+$, $k\leq n$, let \[\begin{split}\Omega_n(k)=\{\vec{x}\in\Cand_n^k\mid \vec{x}[i]=\vec{x}[j]\mbox{ implies }i=j \mbox{ for all }i,j\in\{1,2,\dots,k\}\}\mbox{.}\end{split}\]
Let $\Omega_n=\Omega_n(n)$.
Obviously $\vert \Omega_n(k)\vert=\frac{n!}{(n-k)!}$ and in particular $\vert \Omega_n\vert=n!$. 
\begin{remark}
The set $\Omega_n(k)$ contains sequences of $k$ distinct applicants. The sequences of $\Omega_n$ represent the sequences of applicants that the company is interviewing when selecting the applicants.
\end{remark}

Given $k\in\mathbb{N}^+$, let $\Delta(k)=\{-1,1,k -1, 1-k\}\subseteq\mathbb{Z}$. 
Given $P\subseteq \Cand_n$,  
let \[\begin{split}\Adj(P)=\{\{x,y\}\mid x,y\in P\mbox{ and } \rr(P,x)-\rr(P,y)\in \Delta(\vert P\vert)\}\mbox{.}\end{split}\]
The set $\Adj(P)$ contains all sets $\{x,y\}$ such that the difference of relative ranks of $x,y$ with regard to $P$ is from the set $\Delta(\vert P\vert)$. We say that the applicants $x,y$ are \emph{adjacent} with regard to $P$ if $\{x,y\}\in\Adj(P)$.

\begin{remark}
The set $\Delta(k)$ contains the differences in relative ranks, that ``the company is looking for''; it means $\{-1,1\}\subseteq\Delta(k)$. In addition the set $\Delta(k)$ contains values $1-k,k-1$. In consequence the applicants $\max\{P\}$ and $\min\{P\}$ are adjacent with regard to $P$.
\end{remark}

\section{Secretary problem}

Given $r,k\in\{3,4,\dots,n-1\}$ and $r\leq k$, let \[\begin{split}\Lambda_n(r,k)=\{\vec{x}\in\Omega_n\mid \{\vec{x}[k],\vec{x}[k+1]\}\in\Adj(\Cand_n) \mbox{ and }\\ \{\vec{x}[i-1],\vec{x}[i]\}\not \in\Adj(\toSet(\vec{x}[1,i])\}\mbox{ for all }i\in\{r,r+1,\dots,k\}\}\mbox{.}\end{split}\]
Let $\Lambda_n(r)=\bigcup_{k=r}^{n-1}\Lambda_n(r,k)\mbox{.}$ Obviously $\Lambda_n(r,k)\cap \Lambda_n(r,\overline k)=\emptyset$ if $k\not=\overline k$.

\begin{remark}
\label{thf87ejdh}
Let $\widetilde \tau_n(r)$ denote the following stopping rule: Given $\vec{x}\in\Omega_n$, reject first $r-1$ applicants and then select the first pair $\vec{x}[k],\vec{x}[k+1]$ such that $\{\vec{x}[k],\vec{x}[k+1]\}\in\Adj(\toSet(\vec{x}[1,k+1]))$.

It is clear that given a sequence $\vec{x}\in\Lambda_n(r,k)$, the stopping rule $\widetilde\tau_n(r)$ would select the applicants $\vec{x}[k], \vec{x}[k+1]$. Also, on the other hand, if $\vec{x}\in\Omega_n$ and the stopping rule $\widetilde\tau_n(r)$ selects the applicants $\vec{x}[k], \vec{x}[k+1]$ then $\vec{x}\in\Lambda_n(r,k)$. 
\end{remark}

Given $r\in\{3,4,\dots,n-1\}$, let \[\begin{split}\Pi_n(r)=\{\vec{x}\in\Lambda_n(r)\mid \rr_n(\vec{x}[k])-\rr_n(\vec{x}[k+1])\in\{1-n,n-1\}\mbox{, }\\ \mbox{ where }k\mbox{ is such that }\vec{x}\in\Lambda_n(r,k)\}\mbox{.}\end{split}\]

\begin{remark}
Note that if $\vec{x}\in\Lambda_n(r)$ then there is exactly one $k\in\{r,r+1, \dots,n-1\}$ such that $\vec{x}\in\Lambda_n(r,k)$. Thus the definition of $\Pi_n(r)$ makes sense.
\end{remark}

We express the probability $\prb_{n,\tau}(\alpha)$ by means of sizes of sets.
\begin{lemma}
\label{rud883kfjh}
If $\alpha\in\mathbb{R}^+$, $\alpha<1$, and $\lfloor\alpha n\rfloor\geq 3$ then 
\[\prb_{n,\tau}(\alpha)=\frac{\vert \Lambda_n(\lfloor\alpha n\rfloor)\setminus\Pi_n(\lfloor\alpha n\rfloor)\vert}{\vert \Omega_n\vert}\mbox{.}\]
\end{lemma}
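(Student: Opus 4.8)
The plan is to turn the probability into a counting statement and then identify the success set with $\Lambda_n(r)\setminus\Pi_n(r)$, where $r=\lfloor\alpha n\rfloor$. Since a random sequence of distinct applicants is a uniformly chosen element of $\Omega_n$, I would first write $\prb_{n,\tau}(\alpha)=\vert S\vert/\vert\Omega_n\vert$, where $S\subseteq\Omega_n$ is the set of sequences on which the stopping rule makes a \emph{successful} selection, i.e.\ selects a pair $\vec{x}[k],\vec{x}[k+1]$ with $\rr_n(\vec{x}[k])-\rr_n(\vec{x}[k+1])\in\{-1,1\}$. The whole lemma then reduces to the set identity $S=\Lambda_n(r)\setminus\Pi_n(r)$.

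To prove that identity I would lean on Remark~\ref{thf87ejdh}, which gives the bij"ective" bookkeeping: for $\vec{x}\in\Omega_n$ the rule selects $\vec{x}[k],\vec{x}[k+1]$ exactly when $\vec{x}\in\Lambda_n(r,k)$, and in that event the selected pair satisfies $\{\vec{x}[k],\vec{x}[k+1]\}\in\Adj(\Cand_n)$, i.e.\ $\rr_n(\vec{x}[k])-\rr_n(\vec{x}[k+1])\in\Delta(n)=\{-1,1,n-1,1-n\}$. The four-element set $\Delta(n)$ splits into the genuine successes $\{-1,1\}$ and the two wrap-around values $\{n-1,1-n\}$, and the latter occur precisely when $\{\vec{x}[k],\vec{x}[k+1]\}=\{\min\Cand_n,\max\Cand_n\}$, which by definition is the condition $\vec{x}\in\Pi_n(r)$. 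Thus a stopped sequence is a success iff its $\rr_n$-difference avoids $\{n-1,1-n\}$, i.e.\ iff $\vec{x}\in\Lambda_n(r)\setminus\Pi_n(r)$. The inclusion $\Lambda_n(r)\setminus\Pi_n(r)\subseteq S$ is then immediate (the stopped pair is full-set adjacent and not the global extremes, hence globally consecutive), and for $S\subseteq\Lambda_n(r)\setminus\Pi_n(r)$ one notes that a successful pair is in particular in $\Adj(\Cand_n)$ and, since no earlier pair triggered a stop, the sequence lies in the right $\Lambda_n(r,k)$, with success excluding membership in $\Pi_n(r)$.

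The hard part will be reconciling the two notions of adjacency that appear in the set-up. The company's criterion is phrased through a relative-rank difference in $\{-1,1\}$, whereas $\Adj$ and hence $\Lambda_n(r,k)$ are built from the larger set $\Delta(\vert P\vert)$ containing the wrap-around values $\pm(\vert P\vert-1)$, which flag the current minimum–maximum of the already-interviewed set $P=\toSet(\vec{x}[1,i])$. The delicate step is to show that enlarging the stopping criterion from $\{-1,1\}$ to $\Delta(\vert P\vert)$ does not silently change which sequence is selected: I must verify that whenever a pair is observed-minimal–maximal but \emph{not} globally minimal–maximal it cannot pre-empt a later genuinely consecutive selection, and conversely that every globally consecutive pair is observed-consecutive, so that the ``first $\Delta$-pair'' and the ``first $\{-1,1\}$-pair'' agree after the correction by $\Pi_n(r)$. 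I expect this minimum–maximum reconciliation, together with the routine matching of the rejection count ``reject first $\alpha n$'' to the index range $i\in\{r,\dots,k\}$ in the definition of $\Lambda_n(r,k)$ (a careful off-by-one check between $\lfloor\alpha n\rfloor$ and the first considered pair $\vec{x}[r-1],\vec{x}[r]$), to be where the substantive work lies; once those are settled the cardinality equality is a direct unpacking of the definitions of $\Lambda_n(r)$ and $\Pi_n(r)$.
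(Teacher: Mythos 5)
Your first two paragraphs coincide with the paper's own (very terse) argument: success is a uniform counting event, Remark~\ref{thf87ejdh} converts ``the rule stops at positions $(k,k+1)$'' into $\vec{x}\in\Lambda_n(r,k)$, and inside $\Lambda_n(r)$ the split $\Delta(n)=\{-1,1\}\cup\{n-1,1-n\}$ is exactly the split into $\Lambda_n(r)\setminus\Pi_n(r)$ and $\Pi_n(r)$. The gap is the step you yourself flag as the ``hard part'': reconciling the $\{-1,1\}$ criterion in the definition of $\tau_n(\alpha)$ with the $\Adj$-criterion built into $\Lambda_n(r,k)$. The claim you propose to verify there --- that an observed-min--max pair which is not globally extreme cannot pre-empt a later genuinely consecutive selection --- is false, so this step cannot be carried out. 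Concretely, take $n=8$, $\alpha=1/2$, $r=4$, and $\vec{x}=(4,6,5,7,2,3,1,8)$. The pair at positions $(3,4)$ has observed rank difference $-2$, so nothing is triggered there; the pair $(7,2)$ at positions $(4,5)$ satisfies $\rr(P,7)-\rr(P,2)=4=\vert P\vert-1\in\Delta(5)$ for $P=\{2,4,5,6,7\}$, so the $\Adj$-based rule stops there, and since $\rr_8(7)-\rr_8(2)=5\notin\Delta(8)$ we get $\vec{x}\notin\Lambda_8(4)$. But the rule $\tau_8(1/2)$ as literally defined (stop only on observed difference $\pm 1$) passes over $(7,2)$ and stops at the pair $(2,3)$ at positions $(5,6)$, whose true ranks differ by one: a success. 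Since, conversely, every sequence of $\Lambda_n(r)\setminus\Pi_n(r)$ is a success for the literal rule (global consecutiveness persists in every subset, and no earlier pair is observed-adjacent, hence none has observed difference $\pm 1$), the literal reading yields only $\prb_{n,\tau}(\alpha)\geq\vert\Lambda_n(r)\setminus\Pi_n(r)\vert/\vert\Omega_n\vert$, with strict inequality in this example; the asserted equality fails.

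What makes the lemma true --- and what the paper does silently --- is to read $\prb_{n,\tau}(\alpha)$ as the success probability of the rule $\widetilde\tau_n(\lfloor\alpha n\rfloor)$ of Remark~\ref{thf87ejdh}, whose stopping criterion is membership in $\Adj(\toSet(\vec{x}[1,k+1]))$, wrap-around values included, not the set $\{-1,1\}$ from the introduction. Under that identification there is nothing left to reconcile: a globally consecutive pair is observed-adjacent in every prefix containing it, so $\Lambda_n(r)\setminus\Pi_n(r)$ lies in the success set; and a successful stop occurs at a globally adjacent, non-extreme pair with no earlier observed-adjacent pair, which is exactly membership in some $\Lambda_n(r,k)$ outside $\Pi_n(r)$. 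So your instinct that the two stopping criteria differ is correct, but the discrepancy is an inconsistency between the paper's definition of $\tau_n(\alpha)$ and its Section~3 machinery; it cannot be repaired inside a proof of this lemma, only by adopting the $\Adj$-based rule as the definition of the stopping rule.
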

\begin{proof}
Let $r=\lfloor\alpha n\rfloor$ and let $k\in\{r,r+1,\dots,n-1\}$.
Given $\vec{x}\in\Lambda_n(r)$, obviously we have that \[\begin{split}\vec{x}\in\left(\Lambda_n(r)\setminus\Pi_n(r)\right)\cap \Lambda_n(r,k)\mbox{ if an only if}\\ \rr_n(\vec{x}[k])-\rr_n(\vec{x}[k+1])\in\{-1,1\}\mbox{.}\end{split}\]
The lemma follows then from the definition of $\Lambda_n(r)$ and Remark \ref{thf87ejdh}. This completes the proof.
\end{proof}

We present a technical lemma, that we will apply in the proof of Proposition \ref{eyr63yt7}.
\begin{lemma}
\label{ey77f983jh}
If $P\subseteq \Cand_n$, $\vert P\vert\geq 3$, $x\in P$, and 
\[\upsilon_n(P,x)=\vert\{y\in P\mid y\not=x\mbox{ and }\{x,y\}\in\Adj(P)\}\vert\mbox{, }\]
then $\upsilon_n(P,x)=2$.
\end{lemma}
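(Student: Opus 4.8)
The plan is to reformulate adjacency in $P$ entirely in terms of relative ranks, reducing the statement to counting neighbours in a cycle. First I would set $k=\vert P\vert$ and note that the map $z\mapsto\rr(P,z)$ is a bijection from $P$ onto $\{1,2,\dots,k\}$, since the relative ranks of the $k$ distinct elements of $P$ run through exactly the values $1,2,\dots,k$ as $z$ ranges from the smallest to the largest element of $P$. Writing $m=\rr(P,x)$, the defining condition $\{x,y\}\in\Adj(P)$ becomes $m-\rr(P,y)\in\Delta(k)=\{-1,1,k-1,1-k\}$, which is equivalent to $\rr(P,y)\in\{m+1,\,m-1,\,m-k+1,\,m+k-1\}$.

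Next I would count how many of these four target ranks lie in the admissible range $\{1,2,\dots,k\}\setminus\{m\}$; each admissible rank corresponds to exactly one $y\in P$ with $y\neq x$, so $\upsilon_n(P,x)$ equals this count. A short case split on $m$ settles it. For a middle rank $2\le m\le k-1$, both $m-1$ and $m+1$ lie in range while $m-k+1\le 0$ and $m+k-1>k$ do not. For $m=1$, only $m+1=2$ and $m+k-1=k$ survive. For $m=k$, only $m-1=k-1$ and $m-k+1=1$ survive. In every case exactly two target ranks are admissible.

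Finally I would verify that in each case the two surviving ranks are genuinely distinct, which is precisely where the hypothesis $\vert P\vert\ge 3$ enters: one always has $m+1\neq m-1$, while $2\neq k$ and $k-1\neq 1$ hold exactly because $k\ge 3$. Hence there are exactly two elements $y\in P\setminus\{x\}$ with $\{x,y\}\in\Adj(P)$, giving $\upsilon_n(P,x)=2$. Conceptually, the relation $\Adj(P)$ is nothing but the cycle $1-2-\cdots-k-1$ on the ranks, with $1$ and $k$ joined by the terms $\pm(k-1)$ of $\Delta(k)$, and every vertex of a cycle of length at least three has degree two; the only real obstacle is bookkeeping the two boundary ranks $m=1$ and $m=k$, which the case split above disposes of.
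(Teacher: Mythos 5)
Your proof is correct and takes essentially the same route as the paper's: both arguments count, for each possible rank of $x$, the elements of $P$ at rank difference $\pm 1$ together with the wrap-around adjacency between $\min\{P\}$ and $\max\{P\}$ supplied by the values $k-1,1-k\in\Delta(k)$. Your write-up is somewhat more explicit than the paper's terse bullet-point proof, in particular in pinpointing exactly where the hypothesis $\vert P\vert\geq 3$ is needed to ensure the two adjacent elements are distinct.
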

\begin{proof}
Clearly we have that
\begin{itemize}
\item 
If $x\in P\setminus\{\max\{P\}\}$ then there is exactly exactly one applicant $y\in P$ such that $\rr(P,x)-\rr(P,y)=-1$.
\item 
If $x\in P\setminus\{\min\{P\}\}$ then there is exactly exactly one applicant $y\in P$ such that $\rr(P,x)-\rr(P,y)=1$.
\item
If $\vert P\vert=k$ then 
\[\rr(P,\min\{P\})-\rr(P,\max\{P\})=1-k\in\Delta(k)\mbox{}\]
and \[\rr(P,\max\{P\})-\rr(P,\min\{P\})=k-1\in\Delta(k)\mbox{.}\]
\end{itemize}
Since $\Delta(k)=\{-1,1,k -1, 1-k\}$ the lemma follows.
\end{proof}
\begin{remark}
Lemma \ref{ey77f983jh} is the reason that we have defined that $\max\{P\}$ and $\min\{P\}$ are adjacent with regard to $P$. Otherwise the value of $\upsilon_n(P,x)$ would depend on $x\in P$. As a result the proof of the next proposition will be more simple.
\end{remark}

Given $r,k\in\{3,4,\dots,n-1\}$, $r\leq k$, and $z,\overline z\in\Cand_n$, let \[\Lambda(r,k,z,\overline z)=\{\vec{x}\in\Lambda(r,k)\mid \vec{x}
[k]=z\mbox{ and }\vec{x}[k+1]=\overline z\}\mbox{.}\]

The sets $\Lambda(r,k,z,\overline z)$ form a partition of the set $\Lambda(r,k)$. We derive a formula for the size of the sets $\Lambda(r,k,z,\overline z)$.
\begin{proposition}
\label{eyr63yt7}
If $r,k\in\{3,4,\dots,n-1\}$, $r\leq k$, $z,\overline z\in\Cand_n$, and $\{z,\overline z\}\in\Adj(\Cand_n)$ then \[\vert \Lambda_n(r,k,z,\overline z)\vert=(n-2)!\frac{(r-2)(r-3)}{(k-1)(k-2)}\mbox{.}\]
\end{proposition}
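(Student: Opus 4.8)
The plan is to fix the two adjacent applicants at positions $k,k+1$, peel off the unconstrained tail, reduce to a convenient pair by symmetry, and then express the remaining count through a one‑dimensional recursion driven by Lemma~\ref{ey77f983jh}. For a sequence $\vec x$ and an index $i\ge 3$ write $A_i$ for the event that $\{\vec x[i-1],\vec x[i]\}\in\Adj(\toSet(\vec x[1,i]))$; thus $\vec x\in\Lambda_n(r,k,z,\overline z)$ means exactly that $\vec x[k]=z$, $\vec x[k+1]=\overline z$, and $\neg A_i$ holds for every $i\in\{r,\dots,k\}$ (the hypothesis $\{z,\overline z\}\in\Adj(\Cand_n)$ supplying the remaining defining condition). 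Since these constraints involve only positions $1,\dots,k+1$, the applicants in positions $k+2,\dots,n$ are arbitrary, contributing a factor $(n-k-1)!$. To fix the pair, let $\phi\colon\Cand_n\to\Cand_n$ send the applicant of rank $j$ to the applicant of rank $j+1$ cyclically (rank $n\mapsto$ rank $1$) and let $\psi$ reverse all ranks. Applied entrywise, $\phi$ and $\psi$ preserve cyclic adjacency inside $\Cand_n$ and inside every subset $\toSet(\vec x[1,i])$, because a global cyclic shift or reflection of ranks merely cyclically shifts or reverses the ranks within any subset, leaving its cyclic adjacency structure (hence each event $A_i$) intact. These maps therefore permute the sets $\Lambda_n(r,k,z,\overline z)$ among ordered adjacent pairs, and the group they generate is transitive on all $2n$ such pairs, so it suffices to treat the case where $z$ is the maximum and $\overline z$ the second largest applicant.

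With this choice $z$ is the running maximum of $\toSet(\vec x[1,k])$, so a sequence in $\Lambda_n(r,k,z,\overline z)$ is built by choosing which $k-1$ of the remaining $n-2$ applicants fill positions $1,\dots,k-1$ (any of $\binom{n-2}{k-1}$ choices, since only their relative order is seen by the events $A_i$) and arranging them into a length‑$(k-1)$ relative‑order pattern that avoids $A_i$ for $i\in\{r,\dots,k-1\}$ and, by Lemma~\ref{ey77f983jh} applied at step $k$, puts an \emph{interior} applicant (neither current minimum nor maximum) at position $k-1$. Writing $a_m$ for the number of length‑$m$ patterns avoiding $A_r,\dots,A_m$ whose last entry is interior, this yields \[\vert\Lambda_n(r,k,z,\overline z)\vert=(n-k-1)!\binom{n-2}{k-1}a_{k-1}=\frac{(n-2)!}{(k-1)!}\,a_{k-1},\] so the task reduces to proving $a_{k-1}=(r-2)(r-3)(k-3)!$.

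The heart of the argument, and where I expect the real work, is a recursion for $a_m$. By Lemma~\ref{ey77f983jh} the applicant added at step $i$ has exactly two neighbours in $\toSet(\vec x[1,i])$, so $\neg A_i$ forbids exactly two relative ranks for its predecessor. Tracking the relative rank $R_i$ of the $i$‑th applicant among the first $i$ and splitting into cases according to whether the predecessor and the new entry are \emph{extreme} (rank $1$ or top) or interior, one finds that appending a new entry sends an interior last entry to an extreme one in exactly $2$ ways and to an interior one in $m-3$ ways, whereas an extreme last entry can only become interior, in $m-2$ ways. Letting $e_m$ count valid length‑$m$ patterns with an extreme last entry, this gives \[e_{m+1}=2a_m,\qquad a_{m+1}=(m-3)a_m+(m-2)e_m\qquad(m\ge r-1).\] The main obstacle is carrying out this case analysis cleanly, in particular handling the boundary interactions between the two forbidden ranks and the extreme ranks $1$ and $m+1$.

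Finally I would solve the recursion. At the base $m=r-1$ there are no constraints, so $e_{r-1}=2(r-2)!$ and $a_{r-1}=(r-3)(r-2)!$; a routine induction using the two recurrences then confirms \[a_m=(r-2)(r-3)(m-2)!,\qquad e_m=2(r-2)(r-3)(m-3)!\] for all $m\ge r-1$ (the case $r=3$ being immediate, as then $a_m\equiv 0$, in agreement with the vanishing factor $r-3$). Substituting $a_{k-1}=(r-2)(r-3)(k-3)!$ into the factorization above and simplifying $\tfrac{(k-3)!}{(k-1)!}=\tfrac{1}{(k-1)(k-2)}$ yields $(n-2)!\,\tfrac{(r-2)(r-3)}{(k-1)(k-2)}$, which is the claimed formula.
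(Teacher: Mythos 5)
Your proof is correct, but it takes a genuinely different route from the paper's. The paper builds each sequence \emph{backwards}: it defines suffix sets $H(n),H(n-1),\dots,H(1)$ and counts how many applicants can be prepended to an admissible suffix at each position $j$. The point of that orientation is that the set of the first $j+1$ applicants is completely determined by the suffix, so Lemma \ref{ey77f983jh} makes the number of admissible prepended applicants a fixed factor ($j-2$ in the constrained range, $j$ in the free range) independent of any ``state''; the whole count telescopes into one product, with no recursion, no case analysis, and no need to normalize the pair $(z,\overline z)$. You instead work \emph{forwards}, which forces you to track whether the last interviewed applicant is extreme or interior (the number of admissible ranks for the next applicant is $m-1$ or $m-2$ accordingly), whence your two-state recursion $e_{m+1}=2a_m$, $a_{m+1}=(m-3)a_m+(m-2)e_m$; and to turn the condition at step $k$ into a condition on relative-order patterns you first need the dihedral symmetry argument (cyclic shift and reflection of ranks preserve $\Adj$ of every subset and act transitively on the $2n$ ordered adjacent pairs) so that $z$ may be taken to be the maximum. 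All of these steps are sound: the case analysis, including the boundary cases where the previous rank is $1$ or $m$, is right; the base case $a_{r-1}=(r-3)(r-2)!$, $e_{r-1}=2(r-2)!$ and the degenerate case $r=3$ are right; the induction closes; and the factorization $(n-k-1)!\binom{n-2}{k-1}a_{k-1}=\frac{(n-2)!}{(k-1)!}\,a_{k-1}$ together with $a_{k-1}=(r-2)(r-3)(k-3)!$ gives the claimed formula. As for what each approach buys: the paper's backward construction is shorter and avoids state-tracking entirely, while your version follows the actual interview order, isolates the structural facts that validity depends only on patterns and on the interior/extreme status of the last entry, and your symmetry observation explains conceptually why all $2n$ ordered adjacent pairs contribute equally---a fact the paper uses only implicitly through $\vert T\vert=2n$ in Lemma \ref{dyd6e7hgfuye9}.
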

\begin{proof}
Given $\vec{y}\in \Cand_n^+$, let \[\omega(\vec{y})=\Cand_n\setminus\left(\{z,\overline z\}\cup \toSet(\vec{y})\right)\mbox{.}\]

Given $j\in\mathbb{N}^+$ and $D\subseteq \Cand_n^{+}$, let \[\begin{split}\Suffix_n(D,j)=\{\vec{y}\in\Cand_n^j\mid \mbox{ there is }\vec{x}\in\Cand_n^+ \mbox{ such that }\vec{x}\circ\vec{y}\in D\}\mbox{.}\end{split}\]

For $j\in\{1,2,\dots,n\}$, we define the sets $H(j)\subseteq \Suffix_n(\Omega_n,n-j+1)$ as follows.
\begin{itemize}
\item
Let 
\begin{equation*}
    H(n) =
    \begin{cases}
      \{\overline z\} & \mbox{ if } k+1=n \\
      \{x\mid x\in\Cand_n\setminus\{z,\overline z\}\}        & \mbox{ otherwise.}
    \end{cases}
  \end{equation*}
\item
Given $j\in\{k+2,k+2,\dots,n-1\}$, let 
\[\begin{split}H(j)=\{(x)\circ\vec{y}\mid \vec{y}\in H(j+1) \mbox{ and } x\in\omega(\vec{y})\}\mbox{.}\end{split}\]
\item
Let 
\begin{equation*}
    H(k+1) =
    \begin{cases}
      H(n)=\{\overline z\} & \mbox{ if } k+1=n \\
      \{(\overline z)\circ\vec{y}\mid \vec{y}\in H(k+2)\}        & \mbox{ otherwise.}
    \end{cases}
  \end{equation*}
\item
Let $H(k)=\{(z)\circ\vec{y}\mid \vec{y}\in H(k+1)\}$.
\item
Given $j\in\{r-1,r,\dots,k-1\}$, let 
\[\begin{split}H(j)=\{(x)\circ\vec{y} \mid \vec{y}\in H(j+1)\mbox{ and }x\in\omega(\vec{y})\mbox{ and }\\ \{x,\vec{y}[1]\}\not\in\Adj(\omega(\vec{y})\cup\{\vec{y}[1]\})\}\mbox{.}\end{split}\]
\item
Given $j\in\{1,2,\dots,r-2\}$, let 
\[\begin{split}H(j)=\{(x)\circ\vec{y}\mid \vec{y}\in H(j+1) \mbox{ and } x\in\omega(\vec{y})\}\mbox{.}\end{split}\]
\end{itemize}

It is straightforward to verify that $\Lambda(r,k,z,\overline z)=H(1)$.
We derive the formulas for the size of $H(j)$. Note that if $\vec{y}\in H(j+1)\subseteq \Suffix_n(\Omega_n,n-j)$, then $\vert\vec{y}\vert=n-j$. From the definition of $H(j)$ it follows that 
\begin{itemize}
\item
\begin{equation*}
    \vert H(n)\vert =
    \begin{cases}
      1 & \mbox{ if } k+1=n \\
      n-2        & \mbox{ otherwise.}
    \end{cases}
  \end{equation*}
\item
If $j\in\{k+2,k+2,\dots,n-1\}$ then
$\vert H(j)\vert=(j-2)\vert H(j+1)\vert$. 

Realize that if $\vec{y}\in H(j+1)$ then $\vert \vec{y}\vert=n-j$, $\vert\{z,\overline z\}\vert=2$, and $\toSet(\vec{y})\cap\{z,\overline z\}=\emptyset$. It follows that $\vert\omega(\vec{y})\vert=(n-(n-j)-2)=j-2$.
\item
\begin{equation*}
    \vert H(k+1)\vert =
    \begin{cases}
      \vert \{\overline z\}\vert=1 & \mbox{ if } k+1=n \\
      \vert H(k+2)\vert        & \mbox{ otherwise.}
    \end{cases}
  \end{equation*}
\item
$\vert H(k)\vert=\vert H(k+1)\vert$.
\item
If $j\in\{r-1,r,\dots,k-1\}$ then
$\vert H(j)\vert=(j-2)\vert H(j+1)\vert$. 

Realize that if $\vec{y}\in H(j+1)$ then $\vert \vec{y}\vert=n-j$ and $\{z,\overline z\}\subseteq \toSet(\vec{y})$. It follows that $\vert\omega(\vec{y})\vert=(n-(n-j))=j$. Moreover realize that there are exactly two distinct applicants $x,\overline x\in\omega(\vec{y})$ such that $\{x,\vec{y}[1]\}, \{\overline x,\vec{y}[1]\}\in\Adj(\omega(\vec{y})\cup\{\vec{y}[1]\})$; see Lemma \ref{ey77f983jh}.

\item
If $j\in\{1,2,\dots,r-2\}$ then
$\vert H(j)\vert=(n-(n-j))\vert H(j+1)\vert=j\vert H(j+1)\vert$. 

Realize that if $\vec{y}\in H(j+1)$ then $\vert \vec{y}\vert=n-j$ and $\{z,\overline z\}\subseteq \toSet(\vec{y})$. It follows that $\vert\omega(\vec{y})\vert=(n-(n-j))=j$.
\end{itemize}

Let $g(j)=\frac{\vert H(j)\vert}{\vert H(j+1)\vert}$ for $j\in\{1,2,\dots,n-1\}$. The next table shows the values of $g(j)$.
\begin{table}[h]
\centering
\begin{tabular}{|l|l|l|l|l|l|l|l|l|l|l|l|l|}
\hline
\textbf{j}  & n & n-1 & \dots & k+2 & k+1 & k & k-1 &\dots &  r & r-1 & r-2 & \dots \\ \hline
\textbf{g(j)}  & n-2 & n-3 & \dots & k & 1 & 1 & k-3 &\dots &  r-2 & r-3 & r-2 & \dots \\ \hline
\end{tabular}
\end{table}
From the table we can derive that \[\vert H(1)\vert=(n-2)!\frac{(r-2)(r-3)}{(k-1)(k-2)}\mbox{.}\]
Since $\Lambda(r,k,z,\overline z)=H(1)$ this completes the proof.
\end{proof}
\begin{remark}
From Proposition \ref{eyr63yt7} it follows that $\Lambda_n(3,k,z,\overline z)=\emptyset$. This is correct, since for every $\vec{x}\in\Omega_n$, we have that $\{\vec{x}[2],\vec{x}[3]\}\in\Adj(P)$, where $P=\{\vec{x}[1], \vec{x}[2], \vec{x}[3]\}$.
\end{remark}

Using Proposition \ref{eyr63yt7}, we can present a formula for the size of $\Lambda_n(r,k)$.
\begin{lemma}
\label{dyd6e7hgfuye9}
If $r,k\in\{3,4,\dots,n-1\}$, and $r\leq k$ then \[\vert \Lambda_n(r,k)\vert=2n(n-2)!\frac{(r-2)(r-3)}{(k-1)(k-2)}\mbox{.}\]
\end{lemma}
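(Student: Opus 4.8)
The plan is to obtain $\vert\Lambda_n(r,k)\vert$ by summing the formula from Proposition \ref{eyr63yt7} over all relevant pairs $(z,\overline z)$. As already noted in the excerpt, the sets $\Lambda_n(r,k,z,\overline z)$ partition $\Lambda_n(r,k)$ as $(z,\overline z)$ ranges over the ordered pairs of distinct applicants. A set $\Lambda_n(r,k,z,\overline z)$ can be nonempty only when $\{z,\overline z\}\in\Adj(\Cand_n)$, because every $\vec{x}\in\Lambda_n(r,k)$ satisfies $\{\vec{x}[k],\vec{x}[k+1]\}\in\Adj(\Cand_n)$ by the very definition of $\Lambda_n(r,k)$. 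Hence
\[\vert\Lambda_n(r,k)\vert=\sum_{(z,\overline z)}\vert\Lambda_n(r,k,z,\overline z)\vert\mbox{,}\]
where the sum runs over ordered pairs of distinct applicants with $\{z,\overline z\}\in\Adj(\Cand_n)$.

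By Proposition \ref{eyr63yt7}, each summand equals $(n-2)!\frac{(r-2)(r-3)}{(k-1)(k-2)}$, a value that does not depend on the particular adjacent pair $(z,\overline z)$. Therefore the whole computation reduces to counting the number $N$ of ordered pairs $(z,\overline z)$ of distinct applicants with $\{z,\overline z\}\in\Adj(\Cand_n)$, after which the lemma will follow from the identity $\vert\Lambda_n(r,k)\vert=N\cdot(n-2)!\frac{(r-2)(r-3)}{(k-1)(k-2)}$.

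To count $N$, I would apply Lemma \ref{ey77f983jh} with $P=\Cand_n$, which is legitimate since $\vert\Cand_n\vert=n\geq 3$: it tells us that every applicant $x\in\Cand_n$ is adjacent to exactly two other applicants, that is, $\upsilon_n(\Cand_n,x)=2$. A standard handshaking argument then yields that the number of unordered adjacent pairs equals $\frac{1}{2}\sum_{x\in\Cand_n}\upsilon_n(\Cand_n,x)=\frac{1}{2}\cdot 2n=n$, so the number of ordered pairs is $N=2n$. Substituting $N=2n$ into the identity above gives exactly $\vert\Lambda_n(r,k)\vert=2n(n-2)!\frac{(r-2)(r-3)}{(k-1)(k-2)}$. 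The only content beyond Proposition \ref{eyr63yt7} is this elementary pair count, which is immediate from Lemma \ref{ey77f983jh}, so I anticipate no genuine obstacle here.
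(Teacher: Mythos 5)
Your proposal is correct and follows essentially the same route as the paper: decompose $\Lambda_n(r,k)$ into the disjoint sets $\Lambda_n(r,k,z,\overline z)$ over adjacent ordered pairs, apply Proposition \ref{eyr63yt7} to each, and multiply by the number $2n$ of such pairs. The only difference is that the paper declares the count $2n$ ``obvious,'' while you justify it by a handshaking argument from Lemma \ref{ey77f983jh} --- a harmless (indeed slightly more careful) elaboration of the same step.
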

\begin{proof}
It is clear that 
\begin{itemize}\item$\Lambda_n(r,k)=\bigcup_{(z,\overline z)\in\Cand_n^2}\Lambda_n(r,k,z,\overline z)$ and \item if $z_1,z_2,z_3,z_4\in\Cand_n$ and $(z_1,z_2)\not=(z_3,z_4)$ then \[\Lambda_n(r,k,z_1,z_2)\cap\Lambda_n(r,k,z_3,z_4)=\emptyset\mbox{.}\]
\item If $z,\overline z\in \Cand_n$ and $\{z,\overline z\}\not \in \Adj(\Cand_n)$ then from the definition of $\Lambda_n(k,r)$ and $\Lambda_n(k,r,z,\overline z)$ we have that $\Lambda_n(r,k,z,\overline z)=\emptyset$.
\end{itemize}
Let $T=\{(z,\overline z)\in\Cand_n^2\mid \{z,\overline z\}\in\Adj(\Cand_n)\}$. It follows then that 
\begin{equation}\label{hg6cvsb2}\begin{split}\vert \Lambda_n(r,k)\vert=\sum_{(z,\overline z)\in T}\vert \Lambda_n(r,k,z,\overline z)\vert\mbox{.}\end{split}\end{equation}
 Obviously we have that \begin{equation}\label{tyf764g}\vert T\vert=2n\mbox{.}\end{equation} The lemma follows from (\ref{hg6cvsb2}), (\ref{tyf764g}), and Proposition \ref{eyr63yt7}. This completes the proof.
\end{proof}

Given a sequence $\vec{x}\in\Omega_n$ and $r\in\{3,4,\dots,n-1\}$, the next theorem shows the probability that $\vec{x}\in\Lambda_n(r)$.
\begin{theorem}
\label{dieief887e}
If $r\in\{3,4,\dots,n-1\}$ then \[\frac{\vert \Lambda_n(r)\vert}{\vert\Omega_n\vert}=2\frac{r-3}{n-1}-2\frac{(r-2)(r-3)}{(n-1)(n-2)}\mbox{.}\]
\end{theorem}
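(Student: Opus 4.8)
The plan is to reduce everything to the size formula for the building blocks $\Lambda_n(r,k)$ already established in Lemma \ref{dyd6e7hgfuye9}, and then to evaluate a telescoping sum. First I would use that $\Lambda_n(r)=\bigcup_{k=r}^{n-1}\Lambda_n(r,k)$ is a \emph{disjoint} union (this is noted right after the definition of $\Lambda_n(r)$, since $\Lambda_n(r,k)\cap\Lambda_n(r,\overline{k})=\emptyset$ when $k\neq\overline{k}$). Hence $\vert\Lambda_n(r)\vert=\sum_{k=r}^{n-1}\vert\Lambda_n(r,k)\vert$, and dividing by $\vert\Omega_n\vert=n!$ gives
\[
\frac{\vert\Lambda_n(r)\vert}{\vert\Omega_n\vert}
=\sum_{k=r}^{n-1}\frac{2n(n-2)!}{n!}\,\frac{(r-2)(r-3)}{(k-1)(k-2)}.
\]

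Next I would simplify the constant prefactor. Since $\tfrac{2n(n-2)!}{n!}=\tfrac{2n}{n(n-1)}=\tfrac{2}{n-1}$, the quantities $r$ and the summation variable $k$ separate cleanly, yielding
\[
\frac{\vert\Lambda_n(r)\vert}{\vert\Omega_n\vert}
=\frac{2(r-2)(r-3)}{n-1}\sum_{k=r}^{n-1}\frac{1}{(k-1)(k-2)}.
\]
The only genuine computation is the remaining sum, which I would handle by partial fractions: $\frac{1}{(k-1)(k-2)}=\frac{1}{k-2}-\frac{1}{k-1}$. This makes the sum telescope, so that $\sum_{k=r}^{n-1}\left(\frac{1}{k-2}-\frac{1}{k-1}\right)=\frac{1}{r-2}-\frac{1}{n-2}$.

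Finally I would substitute this value back and expand:
\[
\frac{\vert\Lambda_n(r)\vert}{\vert\Omega_n\vert}
=\frac{2(r-2)(r-3)}{n-1}\left(\frac{1}{r-2}-\frac{1}{n-2}\right)
=\frac{2(r-3)}{n-1}-\frac{2(r-2)(r-3)}{(n-1)(n-2)},
\]
which is exactly the claimed identity. I do not expect any real obstacle here: the argument is entirely arithmetic once Lemma \ref{dyd6e7hgfuye9} is in hand, and the single point requiring a moment's attention is recognizing the telescoping structure of $\sum \frac{1}{(k-1)(k-2)}$ and being careful with the endpoints of the summation (the lower term contributes $\frac{1}{r-2}$ and the upper cancellation leaves $-\frac{1}{n-2}$). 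A brief sanity check I would run is that for $r=3$ both summands vanish, consistent with the remark that every initial triple is already adjacent and so $\Lambda_n(3,k,z,\overline z)=\emptyset$.
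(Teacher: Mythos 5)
Your proposal is correct and follows essentially the same route as the paper's own proof: the disjoint decomposition $\Lambda_n(r)=\bigcup_{k=r}^{n-1}\Lambda_n(r,k)$, the size formula from Lemma \ref{dyd6e7hgfuye9}, simplification of the prefactor to $\tfrac{2}{n-1}$, and the telescoping partial-fraction evaluation $\sum_{k=r}^{n-1}\tfrac{1}{(k-1)(k-2)}=\tfrac{1}{r-2}-\tfrac{1}{n-2}$. The sanity check at $r=3$ is a nice addition but the argument is otherwise identical to the paper's.
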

\begin{proof}
Since $\Lambda_n(r)=\bigcup_{k\geq r}^{n-1}\Lambda_n(r,k)$ and $\Lambda_n(r,k)\cap\Lambda_n(r,\overline k)=\emptyset$ if $k\not=\overline k$, 
we have that \begin{equation}\label{ghry538544}\vert\Lambda_n(r)\vert=\sum_{k=r}^{n-1}\vert\Lambda_n(r,k)\vert\mbox{.}\end{equation}

Recall that $\vert \Omega_n\vert=n!$. Then from Lemma \ref{dyd6e7hgfuye9} and (\ref{ghry538544}) it follows that 
\begin{equation}\label{vhf76udh}\begin{split}\frac{\vert \Lambda_n(r)\vert}{\vert\Omega_n\vert}=\sum_{k=r}^{n-1} \frac{\vert \Lambda_n(r,k)\vert}{n!}=\sum_{k=r}^{n-1}2n\frac{(n-2)!}{n!}\frac{(r-2)(r-3)}{(k-1)(k-2)}= \\ 2\frac{(r-2)(r-3)}{n-1}\sum_{k=r}^{n-1}\frac{1}{(k-1)(k-2)}\mbox{.}\end{split}\end{equation}

We have that 
\begin{equation}\label{hgeyr67}\begin{split}\sum_{k=r}^{n-1}\frac{1}{(k-1)(k-2)}= \sum_{k=r}^{n-1}\left(\frac{1}{k-2}-\frac{1}{k-1}\right)=\\ \frac{1}{r-2}-\frac{1}{r-1} + \frac{1}{r-1}-\frac{1}{r}\dots \frac{1}{n-4}-\frac{1}{n-3} +\frac{1}{n-3}-\frac{1}{n-2}=\\ \frac{1}{r-2}-\frac{1}{n-2}\mbox{.}\end{split}\end{equation}

From (\ref{vhf76udh}) and (\ref{hgeyr67}) it follows that
\begin{equation}\label{b628djll9}\begin{split}\frac{\vert \Lambda_n(r)\vert}{\vert\Omega_n\vert}=2\frac{(r-2)(r-3)}{n-1}\left(\frac{1}{r-2}-\frac{1}{n-2}\right) = \\ 2\frac{r-3}{n-1}-2\frac{(r-2)(r-3)}{(n-1)(n-2)}\mbox{.}\end{split}\end{equation}

This completes the proof.
\end{proof}

Given a sequence $\vec{x}\in\Omega_n$, $\alpha\in\mathbb{R}^+$, and $\alpha<1$, the next lemma shows the probability that $\vec{x}\in\Lambda_n(\alpha n)$ as $n$ tends to infinity
\begin{lemma}
\label{eye6r8u}
If $\alpha\in\mathbb{R}^+$, $\alpha<1$ then 
\[\lim_{n\rightarrow \infty}\frac{\vert \Lambda_n(\lfloor\alpha n\rfloor)\vert}{\vert\Omega_n\vert}\leq \lim_{n\rightarrow \infty}\frac{\vert \Lambda_n(\lfloor\frac{1}{2} n\rfloor)\vert}{\vert\Omega_n\vert}=\frac{1}{2}\mbox{.}\]
\end{lemma}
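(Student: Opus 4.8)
The plan is to reduce the statement to an elementary single-variable maximization by invoking the closed form already established in Theorem~\ref{dieief887e}. First I would set $r=\lfloor\alpha n\rfloor$ and substitute directly into the formula
\[\frac{\vert \Lambda_n(r)\vert}{\vert\Omega_n\vert}=2\frac{r-3}{n-1}-2\frac{(r-2)(r-3)}{(n-1)(n-2)}\mbox{,}\]
which is legitimate once $n$ is large enough that $r=\lfloor\alpha n\rfloor\in\{3,4,\dots,n-1\}$; since $0<\alpha<1$ this holds for all sufficiently large $n$.

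Next I would pass to the limit. The only point requiring a word of justification is the behaviour of the floor: from $\alpha n-1<\lfloor\alpha n\rfloor\leq \alpha n$ one gets $\alpha-\tfrac1n<\tfrac{\lfloor\alpha n\rfloor}{n}\leq\alpha$, so $\lim_{n\to\infty}\tfrac{r}{n}=\alpha$. Consequently $\frac{r-3}{n-1}\to\alpha$ and $\frac{(r-2)(r-3)}{(n-1)(n-2)}\to\alpha^2$, whence
\[\lim_{n\rightarrow\infty}\frac{\vert \Lambda_n(\lfloor\alpha n\rfloor)\vert}{\vert\Omega_n\vert}=2\alpha-2\alpha^2=2\alpha(1-\alpha)\mbox{.}\]
I would record this limit as an intermediate claim, since it isolates the function whose maximum governs the whole problem.

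Finally I would maximize $g(\alpha)=2\alpha(1-\alpha)$ on $(0,1)$. Writing $g(\alpha)=\tfrac12-2\left(\alpha-\tfrac12\right)^2$ makes the conclusion immediate without calculus: $g(\alpha)\leq\tfrac12$ for every $\alpha$, with equality exactly at $\alpha=\tfrac12$, and $g\!\left(\tfrac12\right)=\tfrac12$. Substituting $\alpha=\tfrac12$ back into the limit computation above gives $\lim_{n\to\infty}\frac{\vert \Lambda_n(\lfloor\frac12 n\rfloor)\vert}{\vert\Omega_n\vert}=\tfrac12$, which together with the inequality $g(\alpha)\leq g\!\left(\tfrac12\right)$ yields both the inequality and the equality asserted in the lemma.

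There is no genuine obstacle here: all the combinatorial work has already been carried out in Proposition~\ref{eyr63yt7}, Lemma~\ref{dyd6e7hgfuye9}, and Theorem~\ref{dieief887e}. The only care needed is the elementary estimate controlling $\lfloor\alpha n\rfloor/n$, and then recognizing the downward parabola $2\alpha(1-\alpha)$ and its vertex at $\alpha=\tfrac12$.
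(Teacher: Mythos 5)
Your proposal is correct and follows essentially the same route as the paper: substitute $r=\lfloor\alpha n\rfloor$ into the formula of Theorem~\ref{dieief887e}, pass to the limit $2\alpha-2\alpha^2$, and maximize this parabola at $\alpha=\tfrac12$. If anything, your treatment is slightly more careful than the paper's, since you justify the floor estimate $\lfloor\alpha n\rfloor/n\to\alpha$ explicitly and verify the maximum by completing the square rather than asserting it.
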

\begin{proof}
From Theorem \ref{dieief887e} we get that
\begin{equation}\label{yt7yyu3bv}\begin{split}\lim_{n\rightarrow\infty}\frac{\vert \Lambda_n(\lfloor\alpha n\rfloor)\vert}{\vert\Omega_n\vert}=\lim_{n\rightarrow\infty}\left(2\frac{\alpha n-1}{n-1}-2\frac{(\alpha n-2)(\alpha n-3)}{(n-1)(n-2)}\right)=2\alpha-2\alpha^2\mbox{.}\end{split}\end{equation}

It is easy to verify that the function $f(\alpha)=2\alpha-2\alpha^2$ has a maximum for $\alpha=2^{-1}$ and that $f(2^{-1})=2^{-1}$.  This ends the proof.
\end{proof}

We show the relation between the size of $\Pi_n(r)$ and $\Lambda_n(r)$.
\begin{lemma}
\label{oope0rt9jj}
If $r\in\{3,4,\dots,n-1\}$ then \[\frac{\vert \Pi_n(r)\vert}{\vert \Lambda_n(r)\vert}=\frac{1}{n}\mbox{.}\]
\end{lemma}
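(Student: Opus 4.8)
The plan is to exploit the fact that the size formula in Proposition \ref{eyr63yt7} does not depend on the particular adjacent ordered pair $(z,\overline z)$, but only on $r$, $k$, and $n$. This uniformity reduces the lemma to a short counting argument over the adjacent ordered pairs, with all the genuine combinatorial work already packaged in Proposition \ref{eyr63yt7}.

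First I would partition both $\Pi_n(r)$ and $\Lambda_n(r)$ according to the stopping index $k$ and the selected ordered pair. For each $k\in\{r,\dots,n-1\}$, the sets $\Lambda_n(r,k,z,\overline z)$ with $(z,\overline z)\in T$ partition $\Lambda_n(r,k)$, where $T=\{(z,\overline z)\in\Cand_n^2\mid\{z,\overline z\}\in\Adj(\Cand_n)\}$, exactly as in the proof of Lemma \ref{dyd6e7hgfuye9}. By the definition of $\Pi_n(r)$, a sequence $\vec x\in\Lambda_n(r,k)$ lies in $\Pi_n(r)$ precisely when $\rr_n(\vec x[k])-\rr_n(\vec x[k+1])\in\{1-n,n-1\}$; that is, when its selected ordered pair $(z,\overline z)=(\vec x[k],\vec x[k+1])$ satisfies $\rr_n(z)-\rr_n(\overline z)\in\{1-n,n-1\}$.

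Next I would identify which ordered pairs in $T$ give this extreme rank difference. Since the ranks with respect to $\Cand_n$ lie in $\{1,2,\dots,n\}$, a difference of $n-1$ forces $z=\max\{\Cand_n\}$ and $\overline z=\min\{\Cand_n\}$, while $1-n$ forces the reverse. Thus exactly two of the $2n$ ordered pairs in $T$, namely $(\max\{\Cand_n\},\min\{\Cand_n\})$ and $(\min\{\Cand_n\},\max\{\Cand_n\})$, contribute to $\Pi_n(r)$.

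Finally, because Proposition \ref{eyr63yt7} shows that $\vert\Lambda_n(r,k,z,\overline z)\vert$ takes the same value for every $(z,\overline z)\in T$, summing over the two extreme pairs gives $\vert\Pi_n(r)\cap\Lambda_n(r,k)\vert=2(n-2)!\frac{(r-2)(r-3)}{(k-1)(k-2)}$, while Lemma \ref{dyd6e7hgfuye9} gives $\vert\Lambda_n(r,k)\vert=2n(n-2)!\frac{(r-2)(r-3)}{(k-1)(k-2)}$. Hence the ratio equals $\frac1n$ for every $k$, and since $\Lambda_n(r)$ and $\Pi_n(r)$ are disjoint unions over $k\in\{r,\dots,n-1\}$, summing numerator and denominator separately preserves this common ratio, yielding $\vert\Pi_n(r)\vert/\vert\Lambda_n(r)\vert=\frac1n$. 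I expect no serious obstacle: the only substantive step is the uniformity observation together with counting the two extreme pairs among the $2n$ adjacent ordered pairs.
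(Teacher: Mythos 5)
Your proof is correct, but it takes a genuinely different route from the paper's. You reduce the lemma to bookkeeping on top of Proposition \ref{eyr63yt7}: the cell size $\vert\Lambda_n(r,k,z,\overline z)\vert$ is the same for all $2n$ ordered adjacent pairs $(z,\overline z)\in T$, and $\Pi_n(r)\cap\Lambda_n(r,k)$ is exactly the union of the two cells belonging to the extreme pairs $(\max\{\Cand_n\},\min\{\Cand_n\})$ and $(\min\{\Cand_n\},\max\{\Cand_n\})$, so each stopping index $k$ contributes the ratio $2/(2n)=1/n$, and taking the disjoint union over $k$ preserves it. The paper instead argues by symmetry: it defines the cyclic rank shift $\sigma$ (add $1$ to the relative rank modulo $n$), extends it to sequences, and observes that each $\sigma$-orbit meeting $\Pi_n(r)$ has exactly $n$ elements, of which exactly one lies in $\Pi_n(r)$ and the other $n-1$ lie in $\Lambda_n(r)\setminus\Pi_n(r)$ --- this rotation argument is precisely the reason $\Delta(k)$ was defined to contain $\pm(k-1)$, making adjacency a cyclic notion that the shift preserves. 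Your argument buys rigor essentially for free given the earlier counting results: you never need to check that $\sigma$ maps $\Lambda_n(r)$ into itself, nor that the orbits of elements of $\Pi_n(r)$ cover all of $\Lambda_n(r)\setminus\Pi_n(r)$, steps the paper compresses into ``straightforward to verify''. The paper's argument buys structural insight and independence from Proposition \ref{eyr63yt7}: it explains why the ratio is exactly $1/n$ and would do so even without an exact formula for the cell sizes. One caveat, shared by both proofs and indeed by the statement itself: for $r=3$ all sets involved are empty (visible in your formulas through the factor $r-3$), so the ratio is then $0/0$ and the lemma only makes literal sense for $r\geq 4$.
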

\begin{proof}
Given $m\in\mathbb{Z}$, let $\app(m)=x\in\Cand_n$, where $x$ is such that \[\rr_n(x)=1+(m\bmod n)\mbox{.}\]
Less formally said, the function $\app(m)$ returns the applicant $x\in\Cand_n$ with the relative rank $m$ taken modulo $n$.

Given $x\in \Cand_n$, let $\sigma(x)=\app(\rr_n(x)+1)\in\Cand_n$. Less formally said, the function $\sigma(x)$ returns the applicant $y$ such that $y$ has a higher relative rank and $x,y$ are adjacent, where $x\not=\max\{\Cand_n\}$. If $x=\max\{\Cand_n\}$, then $\sigma(x)=\min\{\Cand_n\}$.

We define that $\sigma(x)=\sigma^1(x)$ and that $\sigma^{i+1}(x)=\sigma(\sigma^i(x))$ for all $i\in \mathbb{N}^+$.
It is clear that $\sigma(x):\Cand_n\rightarrow\Cand_n$ is a bijection and that $\sigma^n(x)=x$. 

If $\vec{x}=(x_1,x_2,\dots,x_n)\in\Omega_n$ and $k\in\mathbb{N}$ then let \[\sigma^k(\vec{x})=(\sigma^k(x_1), \sigma^k(x_2), \dots,\sigma^k(x_n))\mbox{.}\]
Obviously $\sigma^k:\Omega_n\rightarrow\Omega_n$ is a bijection, and $\sigma^n(\vec{x})=\vec{x}$.

Then it is straightforward to verify that if $\vec{x}\in\Pi_n(r)$  and \[G=\{\sigma^i(\vec{x})\mid i\in\{1,2,\dots,n-1\}\}\] then $\vert G\vert=n-1$ and $G\subseteq\Lambda_n(r)\setminus\Pi_n(r)$.
Since $\sigma^n(\vec{x})=\vec{x}$, the lemma follows.
\end{proof}

From Lemma \ref{oope0rt9jj} we have the following obvious result.
\begin{lemma}
\label{cmalfj30}
If $\alpha\in\mathbb{R}^+$, $\alpha<1$ then 
\[\lim_{n\rightarrow \infty}\frac{\vert \Lambda_n(\lfloor\alpha n\rfloor)\setminus\Pi_n(\lfloor\alpha n\rfloor)\vert}{\vert \Lambda_n(\lfloor\alpha n\rfloor)\vert}=1\mbox{.}\]
\end{lemma}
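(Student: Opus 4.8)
The plan is to use the fact that $\Pi_n(r)$ is, by its very definition, a subset of $\Lambda_n(r)$, so that $\Pi_n(r)$ and $\Lambda_n(r)\setminus\Pi_n(r)$ form a disjoint partition of $\Lambda_n(r)$. First I would record the additivity of cardinalities across this partition, namely
\[\vert\Lambda_n(r)\setminus\Pi_n(r)\vert=\vert\Lambda_n(r)\vert-\vert\Pi_n(r)\vert\mbox{,}\]
and then, writing $r=\lfloor\alpha n\rfloor$ and dividing through by $\vert\Lambda_n(r)\vert$, rewrite the quantity of interest as
\[\frac{\vert\Lambda_n(r)\setminus\Pi_n(r)\vert}{\vert\Lambda_n(r)\vert}=1-\frac{\vert\Pi_n(r)\vert}{\vert\Lambda_n(r)\vert}\mbox{.}\]

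Next I would invoke Lemma \ref{oope0rt9jj}, which states precisely that the remaining fraction equals $1/n$. This invocation requires $r=\lfloor\alpha n\rfloor$ to lie in the admissible range $\{3,4,\dots,n-1\}$; since $0<\alpha<1$, this holds for all sufficiently large $n$, which suffices because we are passing to the limit. Substituting then gives
\[\frac{\vert\Lambda_n(\lfloor\alpha n\rfloor)\setminus\Pi_n(\lfloor\alpha n\rfloor)\vert}{\vert\Lambda_n(\lfloor\alpha n\rfloor)\vert}=1-\frac{1}{n}\mbox{,}\]
and letting $n\rightarrow\infty$ yields the claimed limit of $1$.

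There is essentially no obstacle here, which is why the paper advertises the result as ``obvious'': the entire content is the elementary identity $\vert A\setminus B\vert=\vert A\vert-\vert B\vert$ for $B\subseteq A$, combined with the exact ratio already supplied by Lemma \ref{oope0rt9jj}. The only point meriting a word of care is the implicit nonvanishing of $\vert\Lambda_n(\lfloor\alpha n\rfloor)\vert$, which is what makes the quotients well defined in the first place; this is secured for large $n$ by Lemma \ref{eye6r8u}, whose limit $2\alpha-2\alpha^2$ is strictly positive for $0<\alpha<1$.
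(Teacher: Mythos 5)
Your proof is correct and follows exactly the route the paper intends: the paper states the lemma as an ``obvious'' consequence of Lemma \ref{oope0rt9jj}, and your computation $1-\vert\Pi_n(\lfloor\alpha n\rfloor)\vert/\vert\Lambda_n(\lfloor\alpha n\rfloor)\vert=1-\frac{1}{n}\rightarrow 1$ is precisely the verification it omits. Your added remarks on $\lfloor\alpha n\rfloor$ lying in $\{3,4,\dots,n-1\}$ for large $n$ and on the nonvanishing of $\vert\Lambda_n(\lfloor\alpha n\rfloor)\vert$ (via Lemma \ref{eye6r8u}) only make the argument more careful than the original.
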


From Lemma \ref{rud883kfjh} and Lemma \ref{cmalfj30} we have the next corollary.
\begin{corollary}
\label{ur7fue99ufhn}
If $\alpha\in\mathbb{R}^+$, $\alpha<1$ then \[\lim_{n\rightarrow\infty}\prb_{n,\tau}(\alpha)=\lim_{n\rightarrow\infty} \frac{\vert \Lambda_n(\lfloor\alpha n\rfloor)\vert}{\vert \Omega_n\vert}\mbox{.}\]
\end{corollary}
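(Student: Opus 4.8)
The plan is to deduce the corollary from the two cited lemmas by a single algebraic factorization, after checking that their hypotheses hold for all sufficiently large $n$. Write $r=\lfloor\alpha n\rfloor$. Since $\alpha>0$ is fixed, $r\to\infty$, so $r\geq 3$ (indeed $r\geq 4$) for all large $n$; in particular the hypothesis $\lfloor\alpha n\rfloor\geq 3$ of Lemma \ref{rud883kfjh} is eventually satisfied, which is all that a statement about $n\to\infty$ requires. For such $n$, Lemma \ref{rud883kfjh} gives
\[\prb_{n,\tau}(\alpha)=\frac{\vert \Lambda_n(r)\setminus\Pi_n(r)\vert}{\vert \Omega_n\vert}\mbox{.}\]

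Next I would insert the factor $\vert \Lambda_n(r)\vert$ into numerator and denominator to obtain the identity
\[\prb_{n,\tau}(\alpha)=\frac{\vert \Lambda_n(r)\setminus\Pi_n(r)\vert}{\vert \Lambda_n(r)\vert}\cdot\frac{\vert \Lambda_n(r)\vert}{\vert \Omega_n\vert}\mbox{,}\]
which is legitimate because $\vert \Lambda_n(r)\vert>0$ once $r\geq 4$, as one reads off from Theorem \ref{dieief887e}. Letting $n\to\infty$, the first factor tends to $1$ by Lemma \ref{cmalfj30}, while the second factor converges (to $2\alpha-2\alpha^2$) by Lemma \ref{eye6r8u} and hence has a limit. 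Because each factor converges separately, the product converges to the product of the limits, namely to $\lim_{n\to\infty}\frac{\vert \Lambda_n(r)\vert}{\vert \Omega_n\vert}$, which is exactly the claimed equality.

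There is no genuine obstacle here; the content is carried entirely by Lemmas \ref{rud883kfjh} and \ref{cmalfj30}. The only points that need care are bookkeeping: verifying that the hypothesis of Lemma \ref{rud883kfjh} and the nonvanishing of $\vert \Lambda_n(r)\vert$ both hold for all large $n$, so that neither affects the limit, and applying the product rule for limits only after observing that both factors converge, the convergence of $\frac{\vert \Lambda_n(r)\vert}{\vert \Omega_n\vert}$ being supplied by Theorem \ref{dieief887e} and Lemma \ref{eye6r8u}.
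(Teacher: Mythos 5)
Your proof is correct and follows essentially the same route as the paper, which simply cites Lemma \ref{rud883kfjh} and Lemma \ref{cmalfj30} without spelling out the factorization $\prb_{n,\tau}(\alpha)=\frac{\vert \Lambda_n(r)\setminus\Pi_n(r)\vert}{\vert \Lambda_n(r)\vert}\cdot\frac{\vert \Lambda_n(r)\vert}{\vert \Omega_n\vert}$ that you make explicit. Your added bookkeeping (the hypothesis $\lfloor\alpha n\rfloor\geq 3$ holding eventually, $\vert\Lambda_n(r)\vert>0$ via Theorem \ref{dieief887e}, and convergence of the second factor before applying the product rule) is exactly the detail the paper leaves implicit.
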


Then Theorem \ref{uwws45124} follows from  Lemma \ref{eye6r8u} and Corollary \ref{ur7fue99ufhn}.

\section*{Acknowledgments}
This work was supported by the Grant Agency of the Czech Technical University in Prague, grant No. SGS20/183/OHK4/3T/14.

\bibliographystyle{siam}
\IfFileExists{biblio.bib}{\bibliography{biblio}}{\bibliography{../!bibliography/biblio}}

\end{document}